 \newtheorem{thm}{Theorem}[section]
 \newtheorem{prop}[thm]{Proposition}
 \theoremstyle{definition}
 \newtheorem{defn}[thm]{Definition}
 \newtheorem{conj}[thm]{Conjecture}
 \newtheorem{prob}[thm]{Problem}
 \theoremstyle{remark}
\numberwithin{equation}{section} \numberwithin{figure}{section}
\newcommand{\C}{{\mathbb C}}
\newcommand{\D}{{\mathbb D}}
\newcommand{\T}{{\mathbb T}}
\newcommand{\R}{{\mathbb R}}
\newcommand{\N}{{\mathbb N}}
\DeclareMathOperator{\Span}{span}
\begin{document}
\bibliographystyle{alpha}

\title[Optimal approximants]{Some problems on optimal approximants}
\author[Seco]{Daniel Seco}
\address{Departament de Matem\`atica Aplicada i An\`alisi, Facultat de Matem\`atiques, Universitat de Barcelona, Gran Via 585, 08007 Barcelona, Spain.} \email{dseco@mat.uab.cat}
\thanks{The author was supported by ERC Grant Agreement n.291497 of the EU's Seventh Framework Programme (FP/2007-2013), and by Ministerio de Econom\'ia y Competitividad Projects MTM2011-24606 and MTM2014-51824-P}
\date{\today}

\keywords{Cyclicity, Dirichlet-type spaces, optimal approximation.}
\subjclass[2010]{Primary: 47A16. Secondary: 46E22, 30C15.}
\begin{abstract}
We present an account of different problems that arise in relation
with cyclicity problems in Dirichlet-type spaces, in particular with
polynomials $p$ that minimize the norm $\|pf-1\|$.
\end{abstract}

\maketitle


\section{Introduction}

Denote by $\D$, the unit disk of the complex plane and let $\alpha
\in \R$. The \emph{Dirichlet-type space}, $D_{\alpha}$, is the space
of analytic functions $f: \D \rightarrow \C$, given by a Taylor
expansion $f(z) = \sum_{k=0}^{\infty} a_k z^k$, for which
\begin{equation}\label{eqn1} \|f\|^2_{\alpha} = \sum_{k=0}^{\infty}
|a_k|^2(k+1)^{\alpha} < \infty.
\end{equation}
This one-parameter family joins together three classical examples:
Bergman ($\alpha=-1$), Hardy ($\alpha=0$) and Dirichlet ($\alpha=1$)
spaces. Since $\alpha$ will be fixed on each problem, whenever there
is no ambiguity, we will use the notation $\|\cdot\|$ and $\left<,
\right>$ for the norm and inner product in $D_{\alpha}$. All the
$D_{\alpha}$ spaces are reproducing kernel Hilbert spaces, meaning
that for each $\omega \in \D$ there exists a function $k_{\omega,
\alpha}$ such that for any $f \in D_{\alpha}$ we have
\begin{equation}\label{eqn3}\left< f, k_{\omega,\alpha} \right> = f(\omega).\end{equation} They are obviously
nested, in the sense that $D_{\alpha} \subset D_{\beta}$ for $\alpha
> \beta$, and the differentiation operator sends $D_{\alpha}$ to
$D_{\alpha-2}$ in such a way that $\|f\|^2_{\alpha} \approx |f(0)|^2
+ \|f'\|^2_{\alpha -2}$. When $\alpha <2$, an equivalent norm to
that in \eqref{eqn1} is
\begin{equation}\label{eqn2} \|f\|^2_{\alpha,*}= |f(0)|^2 +
\int_{\D} |f'(z)|^2 (1-|z|^2)^{1-\alpha} dA(z),\end{equation} where
$dA(z)= \frac{dxdy}{\pi}$. For $\alpha > 1$, the spaces are closed
under multiplication and the functions in these spaces are
continuous to the boundary, whereas this is not true for any space
for which $\alpha \leq 1$, giving the Dirichlet space a critical
situation for many problems. However, the operator $M_p$, of
multiplication by an analytic polynomial $p$, is always a bounded
operator in $D_{\alpha}$. For more details, we refer the reader to
\cite{HKZ, Duren, Garnett, EFKMR}.

On this work, we will concentrate on problems related to the
(forward) \emph{shift operator} $S: D_{\alpha} \rightarrow
D_{\alpha}$, given by
\begin{equation}\label{eqn4} Sf(z) =zf(z). \end{equation} In terms
of multiplication operators, $S=M_z$ but in terms of Taylor series,
the operator shifts each coefficient to the next position (and hence
its name). It is a deeply studied operator (see, for instance,
\cite{Niko}). A function $f \in D_{\alpha}$ is called \emph{cyclic}
(in $D_{\alpha}$ and with respect to the shift operator) if the
polynomial multiples of $f$ span a dense subspace. Denote by $[f]$
the smallest closed subspace of $D_{\alpha}$ which is invariant
under the shift and contains $f$. Then cyclic functions are exactly
those for which $[f] = D_{\alpha}$. From this definition, and since
polynomials are themselves dense in $D_{\alpha}$, it becomes clear
that $g \equiv 1$ is always a cyclic function and that a necessary
and sufficient condition for cyclicity of $f$ is that there exists a
cyclic function $g \in [f]$. It is therefore sufficient to study
whether there exist some family of polynomials $\{p_n\}_{n \in \N}$
such that
\begin{equation}\label{eqn5}\|p_n f -1\|^2 \stackrel{n \rightarrow \infty}{\rightarrow} 0.\end{equation}
From now on, we will denote by $\mathcal{P}_n$ the space of
polynomials of degree less or equal to $n$.
\begin{defn}\label{defn1} A polynomial $p$ that minimizes the norm $\|pf-1\|^2$
among those $p \in \mathcal{P}_n$ is called an \emph{optimal
approximant} (to $1/f$ of degree $n$ in $D_{\alpha}$).\end{defn} The
objective of this work is to present several problems related with
optimal approximants in this sense. It is clear that their behavior
characterizes cyclic functions, and they were introduced in
\cite{BCLSS13}.

The boundedness of point evaluations (a direct consequence of the
reproducing kernel property) guarantees that a function $f$ is not
cyclic if $f(\omega)=0$ at some point $\omega \in \D$, since
$\|pf-1\|$ will be controlled from below by a constant depending on
$\omega$ times $|p(\omega)f(\omega)-1|$. On the other hand, a
function which is holomorphic on a disk of radius larger than 1,
without zeros on the closed disk will be cyclic in any $D_{\alpha}$
since the Taylor polynomials of $1/f$ will make the norm in
\eqref{eqn5} tend to 0. Hence, the behavior towards the boundary of
the disk is often relevant in the study of cyclicity. Also, from
\eqref{eqn5} and the definitions of the norms it is clear that there
exists a hierarchy with respect to the cyclicity in Dirichlet-type
spaces, i.e., for $\alpha > \beta$, cyclic in $D_{\alpha}$ implies
cyclic in $D_{\beta}$.

In any case, a characterization for cyclicity in general is not
available although several steps have been taken in this direction.
A celebrated theorem of Beurling states that a function is cyclic in
the Hardy space if and only if it is an outer function. Outer
functions are those for which the logarithm satisfies a Mean Value
Property over the unit circle. A previous result by Smirnov and
reproved by Beurling allows to factor $f \in D_0$ as the product of
an outer $\theta$ and an inner function $I$, that is, a holomorphic
function such that $|I(z)| \leq 1 \forall z \in \D$ and
$|I(e^{i\theta})|=1$ for almost every $\theta \in [0, 2\pi)$. In
this sense, Beurling showed that a function is cyclic in Hardy if
and only if its inner factor is trivial.

In general Dirichlet-type spaces, the question was studied in detail
by Brown and Shields in \cite{BS84}. They showed that in the case
when the space is an algebra ($\alpha > 1$), a function $f$ is
cyclic if and only if $1/f$ is in the space $H^{\infty}$ of bounded
analytic functions, that is, if $f$ has no zeros in the closed unit
disk. Brown and Shields also showed that cyclic functions in the
Dirichlet space are different from those in Hardy and different from
those in $D_\alpha$ for $\alpha
>1$. They are different from the Hardy space case in that their
zero sets on the boundary need to be small (sets of zero logarithmic
capacity), and different from $\alpha >1$ since polynomials with
zeros on the boundary are cyclic in Dirichlet (for example, the
function $f(z)=1-z$). For information on logarithmic capacity, we
refer the reader to \cite{EFKMR,Garnett,SaTo}. In their article, the
authors proposed the following question:
\begin{conj}[Brown-Shields]\label{conj1} A function $f \in D_1$ is cyclic in $D_1$ if and only if it is outer and it has a set of boundary zeros of logarithmic capacity equal to zero.\end{conj}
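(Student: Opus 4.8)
The plan is to treat the two implications separately, since they are of quite different character. The forward (necessity) direction---cyclic implies outer with a boundary zero set of zero logarithmic capacity---I expect to be provable by combining tools already available, whereas the reverse (sufficiency) direction is where I anticipate the real difficulty, and it is there that I would concentrate effort.

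For necessity I would first deduce that $f$ is outer. By the cyclicity hierarchy noted above, a function cyclic in $D_1$ is cyclic in $D_0 = H^2$, and Beurling's theorem then forces $f$ to have trivial inner factor, i.e.\ to be outer. To obtain the capacity condition, I would use the fact that functions in $D_1$ possess nontangential boundary values quasi-everywhere on $\T$ (that is, outside a set of zero logarithmic capacity), together with a capacitary strong-type inequality: norm convergence in $D_1$ forces, along a subsequence, quasi-everywhere convergence of boundary values. If $f$ is cyclic, choose polynomials with $\|p_n f - 1\| \to 0$ and pass to such a subsequence, so that $p_n f^* \to 1$ quasi-everywhere on $\T$. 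On the boundary zero set $E = \{\zeta \in \T : f^*(\zeta) = 0\}$ one has $p_n f^* \equiv 0$, so the limit would be $0$ on $E$; comparison with the value $1$ shows $E$ must be negligible for the capacity, i.e.\ has zero logarithmic capacity.

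For sufficiency I would work with the equivalent norm $\|g\|_{1,*}^2 = |g(0)|^2 + \int_{\D} |g'|^2 \, dA$ and try to produce polynomials with $\int_{\D} |(p_n f - 1)'|^2 \, dA \to 0$. The natural first reduction is to the case $f \in D_1 \cap H^\infty$, after which the only obstruction to approximating $1/f$ by polynomials is the vanishing of $f$ on the boundary set $E$. Here I would attempt a potential-theoretic construction: since $E$ has zero logarithmic capacity, one can seek outer cut-off factors, built from logarithmic potentials of measures supported near $E$, that flatten the reciprocal near $E$ yet carry arbitrarily small Dirichlet energy. Multiplying a polynomial approximation of $1/f$ valid where $f$ is bounded below by such a cut-off, and splitting the Dirichlet integral into a piece near $E$ (controlled by the smallness of the capacitary energy) and a piece away from $E$ (controlled by ordinary approximation, where $f$ is bounded away from $0$), one hopes to drive the total norm to zero.

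The main obstacle, and the reason I expect this to resist a complete argument, is precisely the passage from the qualitative hypothesis that $E$ has zero capacity to the quantitative control needed inside the Dirichlet integral. Zero capacity limits the size of $E$ in an energy sense but says nothing about the rate at which $f$ decays toward $E$, nor about the interaction between $f'$ and the singular growth of $1/f$ near the boundary; the cross term in $|(p_n f)'|^2 = |p_n' f + p_n f'|^2$ is exactly where these effects collide. I would therefore expect that a clean proof requires either strengthening the hypothesis on $E$ (for instance to a Carleson-type thinness or a tangential regularity condition, under which the capacitary cut-off estimates become effective) or a genuinely new idea for estimating the optimal approximants of Definition~\ref{defn1} directly. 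In the generality stated, the sufficiency remains the crux, which is why the statement stands as a conjecture.
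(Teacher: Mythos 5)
You were asked to prove Conjecture~\ref{conj1}, the Brown--Shields conjecture, which the paper explicitly records as an open problem (``This problem stands open today''). The paper contains no proof of it, so the only fair assessment is of your attempt on its own terms. Your necessity direction is essentially correct and uses the same tools the paper assembles. Cyclicity in $D_1$ implies cyclicity in $D_0$ by the hierarchy of Dirichlet-type spaces, and Beurling's theorem then gives outerness. For the capacity condition your quasi-everywhere convergence argument along a subsequence works, though the paper sketches an even shorter route via the weak-type inequality for capacity: from the inclusion $Z(f) \subset Z(p^*_n f) \subset \{|p^*_n f - 1| > 1-\varepsilon\}$ one gets $Cap(Z(f) \cap \T) \leq C\,\|p^*_n f - 1\|^2/(1-\varepsilon)^2 \to 0$. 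Both arguments rest on the same capacitary estimate, so this half of your proposal is sound.

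The sufficiency direction contains a genuine gap, which, to your credit, you identify yourself: the passage from the qualitative hypothesis that $E = Z(f) \cap \T$ has zero logarithmic capacity to the quantitative control of $\int_\D |(p_n f - 1)'|^2\, dA$ near $E$. Your proposed cut-off construction --- outer factors built from logarithmic potentials of measures supported near $E$, carrying small Dirichlet energy --- is precisely the device behind the theory of Bergman--Smirnov Exceptional sets referenced in the paper (the book of El-Fallah, Kellay, Mashreghi, and Ransford), and it is known to succeed only for special classes of sets: all countable closed sets and certain uncountable ones. Nobody knows how to run it for an arbitrary closed set of zero capacity, because, as you say, zero capacity constrains the size of $E$ but not the rate at which $f$ decays toward $E$, and the cross term in $|p_n' f + p_n f'|^2$ is exactly where the known estimates fail. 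So your proposal does not prove the statement --- but no complete proof could have been expected, since the statement is a conjecture; what you have written is a correct proof of the (known) necessity half together with an honest and accurate diagnosis of why the sufficiency half remains open.
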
 This problem stands open today and it has
attracted attention from a diversity of authors. Several of the
problems we will present are motivated by this conjecture.

Several attempts at solving Conjecture \ref{conj1} have dealt with
the concept of Bergman-Smirnov Exceptional (BSE) sets, a family of
subsets of $\T$ for which the relative part of Conjecture
\ref{conj1} holds and that includes all countable and many
uncountable closed sets. An excellent reference for this topic is
the book \cite{EFKMR} and we will not deal with it in here, since
the sets for which the BSE condition is not known have complicated
expressions and therefore, it seems out of the question by now to
work with the optimal approximants for functions that would be of
interest in this context.

\section{Polynomial proofs of cyclicity theorems}

\subsection{The classical theorems} The concept of optimal approximant as in Definition
\ref{defn1} was first introduced in \cite{BCLSS13}. See also
\cite{FrMaSe}. There it was shown that for each $\alpha \in \R$, $n
\in \N$, and $f \in D_{\alpha}$ (not identically zero) there exists
a unique optimal approximant to $1/f$ of degree $n$ in $D_\alpha$,
$p^*_n$. Moreover, the authors showed that the coefficients
$(c_k)_{k=0}^n$ of $p^*_n$ are given as the only solution to a
linear system, of the form
\begin{equation}\label{eqn6}
Mc=b
\end{equation}
where $M=(\left<z^j f, z^k f\right>)_{j,k=0}^n$, and $b=(\left<1,z^j
f\right>)_{j=0}^n$.

In the particular case of the Hardy space, it is easy to obtain, as
a corollary, a new proof of part of Beurling's Theorem, namely, that
there can only be one cyclic function in $D_0$ with a given outer
part: Indeed, let $f\in D_0$. From Parseval identity, the elements
of the matrix $M$, $M_{j,k}= \left<z^j f, z^k f\right>$ are equal to
$\lim_{r \rightarrow 1} \int_0^{2\pi} r^{j-k}e^{(j-k)i\theta} |f
(re^{i \theta})|^2 d\theta$, which does not depend on the inner part
of the function $f$.

In fact, it is easy to see that the property $<z^j f,f>=0$ for all
$j \geq 1$ characterizes inner functions among Hardy space functions
(not identically zero). From now on, we will refer to $<z^j f, z^k>$
as the \emph{moments} of $f$. f The simplicity of the proof
encourages one to wonder whether it is possible to complete a proof
of Beurling's Theorem in this way:

\begin{prob}
Show that outer functions are cyclic in $D_0$, using only
information on the optimal approximants and the moments of $f$.
\end{prob}

Two different and simple characterizations of cyclicity in Hardy
that use only information on the approximants were already given in
\cite{BKLSS4}. The proof of existence and uniqueness of $p^*_n$ lies
on the fact that $V_n = \mathcal{P}_n f$ is a finite dimensional
subspace of a Hilbert space, and therefore there is a unique
orthogonal projection. In fact, $p^*_n f$ will be the orthogonal
projection of $1$ onto $V_n$ and this tells us that $\|p^*_n f
-1\|^2 = 1-p^*_n(0)f(0)$. In particular, a function is cyclic if and
only if $p^*_n(0)$ tends to $1/f(0)$ as $n$ tends to $\infty$. This
holds in any of the $D_{\alpha}$ spaces. From now on, $Z(f)$ will
denote the zero set of a function $f$. The following was recently
shown in \cite{BKLSS4}: \begin{thm} Let $f \in D_0$. Then $f$ is
cyclic if and only if
 \begin{equation}\label{eqn10} \prod_{n=0}^{\infty} \left(1-\prod_{z_k \in
Z(p^*_n)} |z_k|^{-2} \right) =
\frac{\overline{f(0)}}{\|f\|^2},\end{equation} where $\{p^*_n\}_{n
\in \N}$ is the sequence of optimal approximants to $1/f$.\end{thm}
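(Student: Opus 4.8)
The plan is to connect the quantity $\prod_{z_k \in Z(p_n^*)} |z_k|^{-2}$ to the norm of the residual $\|p_n^* f - 1\|^2$, exploiting the fact already recorded in the excerpt that $\|p_n^* f - 1\|^2 = 1 - p_n^*(0) f(0)$ and that cyclicity is equivalent to $p_n^*(0) \to 1/f(0)$. The key observation is that in the Hardy space $D_0$ one can work with the factorization $p_n^* f - 1$ as a Hardy function and compare norms across consecutive degrees. First I would establish a recursion: since $V_{n-1} = \mathcal{P}_{n-1} f \subset V_n = \mathcal{P}_n f$, the residuals satisfy $\|p_n^* f - 1\|^2 \leq \|p_{n-1}^* f - 1\|^2$, and the decrease is governed by the component of $1$ in the orthogonal complement $V_n \ominus V_{n-1}$, which is one-dimensional. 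The quotient $\|p_n^* f - 1\|^2 / \|p_{n-1}^* f - 1\|^2$ should then be expressible purely in terms of the leading new information, and the plan is to show this ratio equals exactly $1 - \prod_{z_k \in Z(p_n^*)} |z_k|^{-2}$.

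The heart of the matter is the identification of that ratio with a product over the zeros of $p_n^*$. For this I would use a Schur-type or Szeg\H{o}-type factorization: in the Hardy space, the optimal approximant $p_n^*$ has zeros $z_k$, and one can factor out Blaschke-like or reflected-polynomial factors. The relevant identity is that $\prod_{z_k \in Z(p_n^*)} |z_k|^{-2}$ equals $|c_0/c_n|^2$ where $c_0, c_n$ are the constant and leading coefficients of $p_n^*$ (by Vieta's formulas, $\prod_k z_k = (-1)^n c_0/c_n$, so $\prod_k |z_k|^{-2} = |c_n/c_0|^2$, and I would be careful with which orientation matches the statement). I would then relate the leading coefficient $c_n$ of $p_n^*$ to the norm of the projection of $1$ onto the new direction $V_n \ominus V_{n-1}$, using that this direction is spanned by $z^n f$ minus its projection onto $V_{n-1}$, and that $\langle z^n f, 1 \rangle$ and the Gram determinants control everything. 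Telescoping the resulting product of ratios over $n = 1, \dots, N$ should give a partial product converging to $\|p_N^* f - 1\|^2 / \|p_0^* f - 1\|^2$, and since $\|p_0^* f - 1\|^2 = 1 - |f(0)|^2/\|f\|^2$ (the degree-zero case is explicit), the limit identity \eqref{eqn10} will follow once I divide through and use cyclicity $\iff \|p_N^* f - 1\|^2 \to 0$.

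The main obstacle I anticipate is proving cleanly that the single-step ratio $\|p_n^* f - 1\|^2 / \|p_{n-1}^* f - 1\|^2$ equals $1 - \prod_{z_k \in Z(p_n^*)} |z_k|^{-2}$, rather than merely bounding it. This requires an exact evaluation, not an estimate, and the natural tool is the theory of orthogonal polynomials on the circle (the Szeg\H{o} recursion and Verblunsky coefficients), since the moments $\langle z^j f, z^k f\rangle$ that build the matrix $M$ in \eqref{eqn6} are exactly the moments of the measure $|f|^2 \, d\theta$ on $\T$. The quantities $1 - \prod |z_k|^{-2}$ should correspond to $1 - |\alpha_n|^2$-type factors in the Szeg\H{o} theory, where $\alpha_n$ are the Verblunsky coefficients, and the telescoping product then matches the classical Szeg\H{o} formula relating the product $\prod (1 - |\alpha_n|^2)$ to the geometric mean of the weight. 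The delicate points will be: translating between the optimal-approximant normalization and the orthonormal-polynomial normalization, verifying that the zeros $z_k$ of $p_n^*$ carry precisely the Verblunsky data (this is where the zeros-lie-outside-$\T$ geometry enters, justifying $|z_k|^{-2} < 1$), and confirming that the right-hand constant $\overline{f(0)}/\|f\|^2$ emerges correctly from the $n=0$ term and the Szeg\H{o} limit rather than with a spurious conjugate or modulus.
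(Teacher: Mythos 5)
Your toolbox --- orthogonal polynomials on the circle for the measure $|f^*|^2\,d\theta/2\pi$, the Szeg\H{o} recursion and Verblunsky coefficients, Vieta's formulas plus reflection of zeros --- is exactly the machinery behind the proof this survey cites: the paper does not prove the theorem itself but attributes it to \cite{BKLSS4}, describing that proof as an exploitation of the Toeplitz structure of $M$ through the Levinson algorithm (whose reflection coefficients are precisely your Verblunsky coefficients $\alpha_n$), together with the fact that zeros of optimal approximants are reflections of zeros of orthogonal polynomials. The fatal problem is your pivotal identity. You propose
\begin{equation*}
\frac{\|p_n^* f-1\|^2}{\|p_{n-1}^* f-1\|^2}=1-\prod_{z_k\in Z(p_n^*)}|z_k|^{-2},
\end{equation*}
and to telescope it. Writing $d_n^2=\|p_n^*f-1\|^2$, telescoping would give $\prod_{n=1}^N\bigl(1-\prod_{z_k\in Z(p_n^*)}|z_k|^{-2}\bigr)=d_N^2/d_0^2$, which tends to $0$ exactly when $f$ is cyclic; so your identity would force the infinite product to equal $0$ for every cyclic $f$, contradicting the very statement you are trying to prove (its right-hand side is nonzero, since a cyclic function cannot vanish at the origin). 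It is also false outright: for $f(z)=1-z/2$ one computes $p_0^*=4/5$ and $p_1^*(z)=(20+8z)/21$, hence $d_0^2=1/5$, $d_1^2=1/21$, so the residual ratio is $5/21$, while the only zero of $p_1^*$ is $z_0=-5/2$, giving $1-|z_0|^{-2}=21/25$.

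The quantity that does equal $1-\prod_{z_k\in Z(p_n^*)}|z_k|^{-2}$ is the ratio of constant terms $p_{n-1}^*(0)/p_n^*(0)$, not the ratio of residuals. Indeed, since $p_n^*f$ is the orthogonal projection of $1$ onto $V_n$, one has $p_n^*=\overline{f(0)}\,k_n(\cdot,0)$, where $k_n$ is the Christoffel--Darboux kernel of $|f^*|^2\,d\theta/2\pi$; by the Christoffel--Darboux formula $k_n(\cdot,0)=\kappa_n\varphi_n^*$, so $p_n^*(0)=\overline{f(0)}\,\kappa_n^2$ and
\begin{equation*}
\frac{p_{n-1}^*(0)}{p_n^*(0)}=\frac{\kappa_{n-1}^2}{\kappa_n^2}=1-|\alpha_{n-1}|^2=1-\prod_{z_k\in Z(p_n^*)}|z_k|^{-2},
\end{equation*}
the last step being exactly your Vieta/reflection computation ($\prod_k|z_k|^{-2}=|\varphi_n(0)|^2/\kappa_n^2=|\alpha_{n-1}|^2$); in the example above, $(4/5)/(20/21)=21/25$, as it should be. Telescoping this correct identity gives partial products equal to $p_0^*(0)/p_N^*(0)=\overline{f(0)}/\bigl(\|f\|^2\,p_N^*(0)\bigr)$, and the theorem then follows from the equivalence you already recorded: $f$ is cyclic iff $p_N^*(0)\to 1/f(0)$ (all relevant limits exist because $d_N^2=1-p_N^*(0)f(0)$ is nonincreasing). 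Note the limit so obtained is $|f(0)|^2/\|f\|^2$, which settles your worry about the conjugate --- the stated $\overline{f(0)}/\|f\|^2$ is consistent only under the harmless normalization $f(0)>0$ --- and the product must be read as starting from $n=1$, the $n=0$ factor being degenerate. With the residual ratio replaced by the constant-term ratio, your outline becomes a correct proof and coincides with the cited one.
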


It would be desirable to complete the proof of Beurling's Theorem
with a direct proof that any of the known characterizations actually
match the definition of an outer function.

Brown-Shields Theorem on capacity of the zero sets can be indeed
easily proved from the definition of cyclicity in terms of
polynomials, as a corollary to the following result (which can be
found as Theorem 3.3.1 in \cite{EFKMR}):

Denote by $f^*$ the function defined on the boundary by
nontangential limits of $f$, and by $Cap(E)$, the logarithmic
capacity of a set $E$.
\begin{thm}[Weak-type inequality for capacity]
There exists an absolute constant $C$ such that for $f \in D_1$ and
$t>0$ we have \begin{equation}\label{eqn8} Cap(\{|f^*|>t\}) \leq
\frac{C}{t^2} \|f\|^2.
\end{equation}
\end{thm}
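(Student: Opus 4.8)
The plan is to prove \eqref{eqn8} by exhibiting, for each level set, an explicit admissible function for the capacity built directly out of $f$ itself, thereby reducing the capacitary estimate to a comparison of Dirichlet-type energies. I will use the variational description of the relevant capacity: writing $E_t = \{|f^*| > t\}$, one has that $Cap(E_t)$ equals an infimum of energies $\mathcal{E}(u)$ taken over all $u$ in the appropriate $H^{1/2}(\T)$-type trace class whose quasicontinuous representative satisfies $u \geq 1$ quasi-everywhere on $E_t$, where $\mathcal{E}$ denotes the boundary Dirichlet form attached to $D_1$. The first step is to record the trace identity that makes the comparison possible: for holomorphic $f = \sum a_k z^k$, the norm \eqref{eqn1} with $\alpha = 1$ equals $\sum (k+1)|a_k|^2$, which is comparable to the $H^{1/2}(\T)$ energy of the boundary function $f^*$; this is exactly the content of the equivalent norm \eqref{eqn2}, which expresses $\|f\|^2$ through $\int_{\D}|f'|^2 dA$ and hence through the energy of the harmonic extension.

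The core step is the choice of competitor. Set $u = \min\{|f^*|/t,\, 1\}$. By construction $u = 1$ on $E_t$, so (using the quasicontinuous representative of $f^*$) $u$ is admissible and $Cap(E_t) \leq \mathcal{E}(u)$. It remains to bound $\mathcal{E}(u)$ by $C t^{-2}\|f\|^2$, and this is where the structural fact that $\mathcal{E}$ is a Dirichlet form enters: both $g \mapsto |g|$ and the truncation $g \mapsto \min\{g,1\}$ are normal contractions, and normal contractions do not increase the energy (Beurling--Deny). Since $|\,|f^*(x)| - |f^*(y)|\,| \leq |f^*(x) - f^*(y)|$ pointwise, even for complex-valued $f^*$, we get $\mathcal{E}(|f^*|) \leq \mathcal{E}(f^*)$. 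Combining the modulus contraction, the truncation contraction, and the scaling by $1/t$ yields $\mathcal{E}(u) \leq t^{-2}\mathcal{E}(|f^*|) \leq t^{-2}\mathcal{E}(f^*) \leq C t^{-2}\|f\|^2$, which together with admissibility gives \eqref{eqn8} directly.

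An alternative, dual route is to test $f$ against the equilibrium measure $\mu$ of a compact $E \subset E_t$: since $\mu$ is a probability measure, $t = t\,\mu(E) \leq \int |f^*|\, d\mu$, and one would then try to bound the right-hand side by $\|f\|$ times the square root of the energy of $\mu$ via Cauchy--Schwarz on the Fourier side. I expect the main obstacle along this route to be the unimodular phase: to pair $f^*$ \emph{linearly} against a measure one must write $\int |f^*|\,d\mu = \int f^*\,\overline{\eta}\, d\mu$ with $\eta = f^*/|f^*|$, and multiplying $\mu$ by the unimodular factor $\overline{\eta}$ can inflate its energy in an uncontrolled way. The primal competitor above is preferable precisely because it never linearizes $|f^*|$, delegating the modulus entirely to the contraction property of the Dirichlet form.

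The remaining difficulties are bookkeeping rather than conceptual, but deserve care. First, one must fix the normalization of $Cap$ and check that the capacity in \eqref{eqn8} is the energy-capacity used above, together with its relation to the classical logarithmic capacity of the statement (their null sets coincide, which is what the applications in the Brown--Shields circle of problems require). Second, the admissibility argument is cleanest when phrased with quasicontinuous representatives and quasi-everywhere inequalities; passing from a general $f \in D_1$ to this setting, and from open-set capacity to arbitrary-set capacity, uses the outer regularity of $Cap$, the lower semicontinuity of capacity, and a density argument in $D_1$, as developed in \cite{EFKMR}.
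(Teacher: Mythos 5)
You should know at the outset that the paper contains no proof of this statement: it is quoted verbatim as Theorem 3.3.1 of \cite{EFKMR} and used only as a black box from which the Brown--Shields capacity condition for cyclicity is rederived. So the only meaningful comparison is with the proof in that reference, which is the classical (Beurling-type) \emph{dual} argument --- precisely the route you sketch and then set aside: for a compact $E\subset\{|f^*|>t\}$ one integrates against a probability measure $\mu$ on $E$, uses the radial representation $|f^*(\zeta)|\le |f(0)|+\int_0^1|f'(r\zeta)|\,dr$ (valid quasi-everywhere), and applies Cauchy--Schwarz with a weight chosen so that one factor is the Dirichlet integral and the other is, up to absolute constants, the logarithmic energy $I(\mu)$; minimizing over $\mu$, i.e.\ passing to the equilibrium measure, gives the inequality. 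Note that the phase obstruction you raise against this route is not actually there: the modulus sits inside the radial integral of $|f'|$, so nothing is ever paired linearly against $f^*$ --- this is exactly how the classical argument evades the problem you identify. Your primal argument --- the competitor $u=\min\{|f^*|/t,1\}$, its admissibility, and the Beurling--Deny contraction property of the boundary $H^{1/2}$ form, which is manifest from the Douglas double-integral formula for $\int_{\D}|f'|^2\,dA$ --- is a genuinely different and essentially correct proof: it reduces the theorem to the standard Chebyshev inequality for the capacity of a regular Dirichlet form, avoids equilibrium measures entirely, and produces the constant directly from the norm equivalence. What the dual route buys in exchange is that it works with the logarithmic kernel from the start, so the capacity in its conclusion is literally the one in the statement, with no translation step.

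That translation step is the one item you should not file under bookkeeping. The stated inequality is quantitative, so it is \emph{not} enough that the energy capacity of the $H^{1/2}$ form and logarithmic capacity have the same null sets (that parenthetical in your last paragraph would only justify the qualitative corollaries); you need two-sided comparability of the two capacities for subsets of $\T$, with absolute constants. This is true, and it is the substantive reason logarithmic capacity is the right capacity for $D_1$ at all: the kernel computing the dual energy of the form $\sum_{k}(1+|k|)|\hat u(k)|^2$ is $\sum_{k} e^{ik\theta}/(1+|k|)$, which differs from $2\log\bigl(1/|e^{i\theta}-1|\bigr)$ by a bounded function, so the two energies of a probability measure on $\T$ differ by $O(1)$, both are bounded below by positive absolute constants, and hence the capacities (reciprocals of minimal energies, in the normalization of \cite{EFKMR}) are comparable. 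With that lemma made explicit, and with the standard facts you already list (radial limits of $D_1$ functions exist quasi-everywhere and furnish the quasicontinuous representative; outer regularity to pass from compact to arbitrary sets), your proof closes.
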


From here, to prove Brown and Shields Theorem one only needs to see
that $$Z(f) \subset Z(p^*_n f) \subset \{|p^*_n f -1|>
1-\varepsilon\}.$$

A strengthening of the other result of Brown and Shields (on simple
functions with zeros on the boundary being cyclic) using only the
optimal approximants was already given in \cite{BCLSS13}, and so,
this theory could represent a unified approach to several results on
cyclic functions.

\subsection{Matrices and algorithms}
A Grammian is a matrix given by the inner products of a sequence of
functions with themselves, a matrix with entries
$G_{j,k}=\left<f_j,f_k\right>$. Grammians form a family of matrices
that have been studied for more than a hundred years, for their
relations with the orthogonal projection. The matrix $M$ appearing
in \eqref{eqn6} is a Hermitian Grammian but in some cases it will
have additional structure. Continuing with the case $\alpha=0$, we
may notice that $M_{j,k} = M_{j-k,0}$. A matrix with this property
is called a \emph{Toeplitz matrix}. The Toeplitz structure was
exploited in \cite{BKLSS4} in order to show that we can characterize
cyclicity in Hardy in terms of the zeros of $p^*_n$ exclusively. In
order to show this, it was relevant to study the Levinson algorithm,
which is an efficient algorithm for the inversion of a Toeplitz
matrix. See \cite{Levin}. Toeplitz inversion algorithms are
typically based on either the Schur (see \cite{Schur}) or the
Levinson algorithms.

The reason why the matrices appearing in Hardy are Toeplitz matrices
is that the shift is an isometry in the Hardy space (onto its
image). Unfortunately, this is clearly not true in any other of the
$D_{\alpha}$ spaces: for $\alpha >0$, the shift increases the norm
of a function, whereas for $\alpha<0$, it reduces this norm.
However, the shift in the Dirichlet space ($\alpha=1$) does have a
special property: it is a 2-isometry, meaning that
\begin{equation}\label{eqn7}\|f\|_1^2 - 2 \|Sf\|_1^2 + \|S^2f\|_1^2=0.\end{equation}
In other words,\begin{equation}\label{eqn11} M_{j,k} -
M_{j+1,k+1}=M_{j+1,k+1}-M_{j+2,k+2}.\end{equation}

Therefore we can propose the following problem:

\begin{prob}
Develop an analogous of the Levinson or Schur algorithms that
exploits the structure of a Hermitian matrix which satisfies
\eqref{eqn11}, in order to compute its inverse.
\end{prob}

Ideally, a recursive formula for the inverse matrix could lead to a
similar condition to that in \eqref{eqn10}.

\section{Examples and the role of the logarithmic potential}

\subsection{Brown and Cohn's examples} There are several sources of positive results for the
Brown-Shields Conjecture. For instance, right after Brown and
Shields paper appeared, it was shown in \cite{BC85} that the
conjecture is sharp:

\begin{thm}[Brown-Cohn]
Let $E \subset \T$ be a closed set of logarithmic capacity zero.
Then there exists a cyclic function $f \in D_1$ such that $E \subset
Z(f)$.
\end{thm}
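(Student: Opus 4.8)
The plan is to construct $f$ explicitly as an outer function whose boundary behavior is controlled by the set $E$. Since $E$ has logarithmic capacity zero, there exists a probability measure—or rather, the obstruction to having finite logarithmic energy can be exploited in reverse: I would seek a positive function $w$ on $\T$ that blows up to $+\infty$ precisely on $E$ (so that the outer function it generates vanishes on $E$), while remaining mild enough that the resulting outer function still lies in $D_1$. Concretely, I would look for a nonnegative function $h$ on $\T$, integrable, with $h = -\infty$ on $E$ (i.e. $\log$-type decay forcing the boundary values to zero on $E$), and define
\begin{equation}\label{eqn:outer}
f(z) = \exp\left( \frac{1}{2\pi}\int_0^{2\pi} \frac{e^{i\theta}+z}{e^{i\theta}-z} \, h(e^{i\theta}) \, d\theta \right).
\end{equation}
This automatically produces an outer function with $|f^*| = e^{h}$ a.e., so $Z(f^*) \supseteq E$. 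The core difficulty is then to arrange $h$ so that simultaneously (i) $f \in D_1$ and (ii) $f$ is genuinely cyclic, not merely a member of the space with the right zero set.

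For the membership $f \in D_1$, I would use the equivalent norm \eqref{eqn2} with $\alpha = 1$, which reduces to controlling $\int_{\D} |f'(z)|^2 \, dA(z)$, the classical Dirichlet integral. Differentiating \eqref{eqn:outer} gives $f' = f \cdot g$ where $g$ is the derivative of the Herglotz integral, so the task becomes estimating $\int_{\D}|f|^2 |g|^2\, dA$. Here the capacity-zero hypothesis should enter decisively: the fact that $E$ carries no measure of finite logarithmic energy is exactly what lets one choose the profile of $h$ near $E$ so that the singularity of $g$ is integrable against the Dirichlet area measure. I would expect to build $h$ from the logarithmic potential of a carefully chosen approximating sequence of measures, or alternatively to invoke a known characterization of which outer functions lie in $D_1$ in terms of their boundary modulus.

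For cyclicity, I would appeal to the structure already laid out in the excerpt: having produced an outer $f \in D_1$ whose boundary zero set has capacity zero, cyclicity is precisely the content that the Brown--Shields conjecture predicts, and in the positive (capacity-zero) direction this direction of the conjecture is known. Concretely, I would exhibit a sequence of polynomials, or more naturally of bounded outer multipliers, approximating $1/f$, and control $\|p_n f - 1\|$ via the weak-type capacity inequality \eqref{eqn8} together with the fact that the relevant exceptional set has capacity zero. The cleanest route is to reduce to a known cyclicity criterion for outer functions in $D_1$ whose zero set is of capacity zero, so that the real content of the theorem lies entirely in the construction of $f$ rather than in verifying its cyclicity.

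The main obstacle, I expect, is step (i)--(ii) of the construction: balancing the competing demands that $h$ be singular enough on $E$ to force vanishing there, yet regular enough that $f$ stays in $D_1$ and remains cyclic. This is a delicate quantitative matter, and I anticipate it will require translating the qualitative statement ``$\mathrm{Cap}(E)=0$'' into a concrete majorant for the growth of $h$ near $E$—presumably by approximating $E$ from outside by open sets $U_n \downarrow E$ with $\mathrm{Cap}(U_n) \to 0$ and summing logarithmic potentials $\sum_n \lambda_n U_n$ with weights $\lambda_n$ tuned so that the total Dirichlet energy converges. Getting these weights right, and verifying that the resulting $f$ does not accidentally acquire a nontrivial inner factor, is where the care is needed.
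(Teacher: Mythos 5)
Your construction step is in the right spirit---Brown and Cohn do indeed produce an outer function, built from potentials adapted to the capacity-zero hypothesis, whose boundary modulus is forced to vanish on $E$---but your cyclicity step contains a fatal gap. You assert that once $f$ is outer, lies in $D_1$, and has $Z(f)\cap\T$ of logarithmic capacity zero, ``this direction of the conjecture is known.'' It is not: the implication (outer $+$ capacity-zero boundary zero set $\Rightarrow$ cyclic) is precisely the open direction of the Brown--Shields conjecture (Conjecture \ref{conj1}, which the present paper states ``stands open today''). What is known is the converse: cyclicity forces outerness and a capacity-zero zero set. The weak-type capacity inequality you invoke runs only in that direction as well---applied to $p_nf-1$ it bounds $Cap(Z(f)\cap\T)$ above by a constant times $\|p_nf-1\|^2$, hence gives a \emph{lower} bound on $\|p_nf-1\|$ when the capacity is positive; it can never be used to show that $\|p_nf-1\|\to 0$. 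So, as written, your argument establishes the theorem only modulo an open problem, which is circular in effect: the theorem is supposed to be unconditional.

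The repair is to extract more from the construction than the bare conclusion requires. The functions constructed in \cite{BC85} satisfy an additional regularity property: not only $f\in D_1$, but $\log f\in D_1$ (and $f$ is continuous up to the boundary). Cyclicity then follows, not from the conjecture, but from a genuine theorem of Aleman \cite{Aleman}: any $f\in D_1$ with $\log f\in D_1$ is cyclic in $D_1$. Your plan of building $h=\log|f^*|$ as a sum of logarithmic potentials with weights tuned so that the total Dirichlet energy converges is actually well suited to this stronger goal: controlling the Dirichlet integral of the Herglotz transform of $h$ is controlling $\|\log f\|_1$, not merely $\|f\|_1$, and once $\log f\in D_1$ is secured, Aleman's criterion closes the argument (this is exactly the route the paper indicates). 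Without some such known sufficient condition for cyclicity, the second half of your proposal cannot be completed.
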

The example functions constructed in this paper satisfy additional
regularity properties: they are functions continuous to the
boundary, and they have logarithms that are also in $D_1$. Although
the proof of cyclicity of these functions is left for the reader,
this is easily derived from a more general later statement by Aleman
(\cite{Aleman}), showing as a particular case that any $f \in D_1$
such that $\log f \in D_1$ must be cyclic. A sufficient condition
for a function $f$ to be outer is $\log f \in H^1$ (where $H^1$
denotes the space of holomorphic functions on the disk, integrable
on the boundary), or equivalently $(\log f)^{1/2} \in D_0$. At the
same time, if $\alpha \in (0,1]$ and there exists a number $t>0$
such that $(\log f)^t \in D_{\alpha}$ then the $\alpha$-capacity of
the zero set of $f$ will satisfy the corresponding necessary
condition for cyclicity, that $Cap_{\alpha}(Z(f)) =0$. Therefore, it
seems natural to ask the following:

\begin{prob}
Fix $\alpha \in (0,1]$. What are the values of $t >0$ such that for
any $f\in D_{\alpha}$, if $(\log f)^t \in D_{\alpha}$ then $f$ is
cyclic in $D_{\alpha}$?
\end{prob}

We know $1/2$ works (and is optimal) for $\alpha =0$ and $1$ does
for $\alpha>0$. It seems natural to think that $1/2$ may work in all
cases. This seems related with another problem that is closely
related with Brown-Shields Conjecture:

\begin{prob}
Fix $\alpha \in (0,1]$. What are the values of $(t,\beta) \in \R^2$
such that for any $f\in D_{\alpha}$, whenever $(\log f)^t \in
D_{\beta}$ and the $\alpha$-capacity of $Z(f) \cap \T$ is zero, then
$f$ is cyclic in $D_{\alpha}$?
\end{prob}

Again, clearly, $t \geq 1/2, \beta=0$ work for $\alpha=0$ and so do
$t \geq 1$ when $\beta=\alpha$. In fact, all the examples of cyclic
functions for Dirichlet we know of satisfy some such condition with
$\beta=1$: cyclic polynomials satisfy it whenever $t<1/2$ or
$\beta<1$, and the Brown-Cohn examples satisfy it with $t=1=\beta$.

\subsection{An unresolved case} Another result in \cite{BS84} states that for outer
functions in $D_2$ their cyclicity in $D_1$ depends only on their
zero sets, so it seems natural to think that the Brown-Shields
conjecture will be true for the particular case of $f \in D_2$,
although this is yet to be shown. It would be enough to show that
Brown and Cohn's result in \cite{BC85} can be improved in terms of
the regularity of the functions, although the zero sets for $D_2$
could form a smaller class.

\begin{prob}\label{prob1}
Does it hold that for any closed subset $E$ of $\T$ of logarithmic
capacity zero such that $E= Z(f_1)$, for some $f_1\in D_2$, there
exists a function $f_2 \in D_2$ that is cyclic in $D_1$ and such
that $E \subset Z(f_2)$.
\end{prob}

It seems reasonable to expect that the $D_2$ condition does help the
function to be cyclic and that the answer to Problem \ref{prob1} is
positive. After several candidates for a level of regularity that
would improve the cyclicity of the function, the question arises of
finding a function for which something can be done but satisfying
none of the unnecessary regularity conditions.

\begin{prob}
Is there an outer function $f \in D_1 \backslash H^{\infty}$
satisfying all of the following:
\begin{enumerate}
\item $Z(f) \cap \T$ has zero logarithmic capacity.
\item For all $t >0$, $(\log f)^t \notin D_1$.
\item The elements of the matrix $M_{j,k} = \left<z^j f, z^k f\right>$ can be computed from existing information.
\end{enumerate}
\end{prob}

The assumption that $f \notin H^{\infty}$ guarantees that $f$ does
not belong to any of the multiplicative algebras ($D_{\alpha}$ with
$\alpha >1$). The last requirement could be replaced by any other
that allows to work towards proving or disproving the cyclicity of
the function.

\subsection{The minimization of logarithmic energies}
A classical problem in analysis is that of finding sets of $n$
points that minimize the energy generated by a given potential with
certain restrictions. In the plane such potentials are usually
related with the logarithmic potential and this is connected with
the problem of determining the orthogonal polynomials for a
particular measure over the unit circle. A good summary of such
situations can be found in \cite{SaTo}.

In \cite{BKLSS4}, the authors show a correspondence between
orthogonal polynomials for some such measures and optimal
approximants for a function in $D_{\alpha}$. In fact, in the Hardy
space, the zeros of orthogonal polynomials are reflections of the
zeros of optimal approximants, and hence it may happen that sets
minimizing energies tied to some logarithmic potentials describe the
zero sets of the optimal approximants. A very ambitious program
could be based on the following problem:
\begin{prob}\label{prob400}
Given $f \in D_{\alpha}$. Determine whether there exists and
describe a potential for an energy which is minimized, for all $n\in
\N$, at the zero set of the optimal approximant $p^*_n$ to $1/f$ in
$D_{\alpha}$.
\end{prob}

A plausible reduction of this problem is that it could be enough to
study only 2 points on the zero set: on one hand, the interaction
between any two points of the zero set will minimize some energy
described by the rest of the points and the function $f$; on the
other, any two zeros $z_0, z_1$ of an optimal approximant $p$ of
degree $n\geq 2$ for a function $f$ determine also the optimal
approximant of degree 2 to the function $pf/(z-z_0)(z-z_1)$. Hence
it could be enough to solve for polynomials of degree 2 for all
functions. This may still be a large problem.

As an illustration of how to find a closed formula for the optimal
approximants to a small collection of functions, we can look at the
functions $f_a=(1-z)^a$, $a \in \N$ and $a \geq 2$, which has a root
of multiplicity $a$ at $z=1$. The optimal approximants to $1/f_a$
may be computed explicitly with a closed formula in the case of the
Hardy space. In the present paper, we denote by $B$ the beta
function, $B(x,y)=\int_{0}^{1}t^{x-1}(1-t)^{y-1}dt$.
\begin{prop}\label{prop100}
Let $a \in \N$. The $\mathrm{n}$th-order optimal approximant to
$1/(1-z)^a$ in $D_0$ is given by
\begin{equation}
p_n(z)=\sum_{k=0}^{n}\left( \binom{k+a-1}{k}
\frac{B(n+a+1,a)}{B(n-k+1,a)}\right)z^k. \label{prop100approx}
\end{equation}
\end{prop}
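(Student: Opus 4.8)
The plan is to solve the linear system $Mc=b$ from \eqref{eqn6} explicitly in the Hardy space case. First I would set up the Gram matrix and moment vector. For $f_a=(1-z)^a$ in $D_0$, the inner products $\langle z^j f_a, z^k f_a\rangle$ and $\langle 1, z^j f_a\rangle$ are Hardy-space (i.e.\ $\ell^2$ of Taylor coefficients) pairings, so everything reduces to computing the Taylor coefficients of $(1-z)^a$ and $z^k(1-z)^a$. Since $(1-z)^a=\sum_{m=0}^a \binom{a}{m}(-1)^m z^m$ is a polynomial of degree $a$, the matrix $M$ is banded and the moments are elementary finite sums of products of binomial coefficients; the entries $M_{j,k}$ will depend only on $j-k$ (the Toeplitz structure noted in the excerpt), with $M_{j,k}=\sum_m \binom{a}{m}\binom{a}{m+j-k}$.

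The cleaner route, which I would pursue instead of inverting $M$ directly, is to work with the characterization that $p_n f_a$ is the orthogonal projection of $1$ onto $\mathcal{P}_n f_a$. Equivalently, writing $P_n(z)=p_n(z)(1-z)^a$, the residual $1-P_n$ must be orthogonal in $D_0$ to $z^k(1-z)^a$ for $k=0,\dots,n$; since $P_n$ is a polynomial of degree $n+a$ divisible by $(1-z)^a$, these are the Hardy-space normal equations. The key step is to guess the closed form \eqref{prop100approx} and verify it satisfies these $n+1$ orthogonality conditions. To do this I would compute the Taylor coefficients of $p_n(z)(1-z)^a$ and show that the inner product with each $z^k(1-z)^a$ matches $\langle 1, z^k(1-z)^a\rangle=\binom{a}{0}=1$ when $k=0$ (up to the appropriate normalization) and the required values for $k\ge 1$. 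The beta-function ratio $B(n+a+1,a)/B(n-k+1,a)$ is, after expanding via $B(x,a)=\Gamma(x)\Gamma(a)/\Gamma(x+a)$, a ratio of Pochhammer symbols / falling factorials, so the verification amounts to proving a hypergeometric-type binomial summation identity.

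Concretely, the main computation is establishing that the coefficients $c_k=\binom{k+a-1}{k}B(n+a+1,a)/B(n-k+1,a)$ solve $\sum_{k} M_{j,k} c_k = b_j$. After substituting the binomial/beta expressions, each equation becomes a finite hypergeometric sum in $k$, and I expect these to collapse by the Chu--Vandermonde identity or a contiguous relation. An alternative verification I would keep in reserve is induction on $n$: relate $p_{n}$ to $p_{n-1}$ via the recursive (Levinson-type) update for Toeplitz systems mentioned in the excerpt, and check that the proposed formula obeys the same recursion and initial condition $p_0(z)=c_0$.

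The hard part will be the binomial summation identity itself --- matching $\sum_k M_{j,k}c_k$ to $b_j$ requires recognizing the resulting sum as a known closed-form evaluation rather than grinding term by term, and the appearance of the ratio of beta functions (rather than a single binomial coefficient) suggests the natural proof passes through Gamma-function manipulation and a Vandermonde-type summation. Getting the normalization constant $B(n+a+1,a)$ exactly right, so that the $k=0$ equation forces $\|p_nf_a-1\|^2=1-p_n(0)f_a(0)=1-c_0$ to come out consistently, is the place where sign and index bookkeeping is most likely to cause trouble.
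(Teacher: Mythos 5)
Your setup is the same as the paper's: form the Gram matrix, use the isometry of multiplication by $z$ in $D_0$ to get the Toeplitz/banded structure $M_{j,k}=\sum_m \binom{a}{m}\binom{a}{m+j-k}$, note that $b=(1,0,\dots,0)^T$, and then verify that the proposed coefficients solve $Mc=b$. (The paper additionally applies Chu--Vandermonde at this stage to compress the entries to $M_{j,k}=(-1)^{j-k}\binom{2a}{a+k-j}$.) But the entire mathematical content of the proposition lies in the verification step, and your proposal defers it: ``I expect these to collapse by the Chu--Vandermonde identity or a contiguous relation'' is a hope, not an argument, and it is not the mechanism that actually works. The paper's key observation is structural: the coefficients $c_{k,n}$ are, up to a constant $t_n$, the polynomial $\prod_{s=1}^{a-1}(k+s)\prod_{r=1}^{a}(n+r-k)$ in $k$, of degree exactly $2a-1$; consequently each equation $\sum_k M_{j,k}c_{k,n}=0$ is (after reindexing) the $2a$-th finite difference $\sum_{s=0}^{2a}(-1)^s\binom{2a}{s}q(s)$ of a polynomial $q$ of degree $\le 2a-1$, which vanishes by Newton's finite-difference theorem. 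That is the identity you are looking for, and it is not Chu--Vandermonde.

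There is a second trap your plan does not see: the finite-difference argument only applies verbatim to the interior equations $j\in\{a,\dots,n-a\}$, where the summation range $k\in\{0,\dots,n\}$ contains the full band of $M_{j,\cdot}$. For boundary indices $j$ the sum is truncated, and vanishing fails for a generic degree-$(2a-1)$ polynomial; one needs precisely that the extension of $c_{k,n}$ vanishes at $k\in\{1-a,\dots,-1\}\cup\{n+1,\dots,n+a\}$ --- which is exactly the root structure of the product above --- so that the truncated sums can be completed by zero terms back to full finite differences. Any ``uniform'' hypergeometric evaluation would have to split into these cases as well. Finally, the normalization is not automatic either: the paper shows the only surviving equation, $A_0=1$, reduces by the same completion trick to the single term $A_0=-M_{0,a}c_{-a,n}=(-1)^{a+1}c_{-a,n}$, which pins down $t_n=\Gamma(n+a+1)/(\Gamma(a)\Gamma(n+2a+1))$ and hence the beta-function form. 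Your approach could in principle be completed by genuine hypergeometric summation (e.g.\ a Saalsch\"utz-type evaluation handling interior and boundary $j$ separately), but as written the proposal stops exactly where the proof begins.
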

\begin{proof}
Let us first compute the elements $M_{j,k}$ of the matrix $M$
\eqref{eqn6} associated with $f_a=(1-z)^a$. Since the matrix in
question is Hermitian, we can, without loss of generality, take $j
\geq k$, and since multiplication by $z^k$ is an isometry, we have
that

$$M_{j,k} = \left\langle z^{j-k} (1-z)^a, (1-z)^a \right\rangle.$$

Substituting the Taylor coefficients of $f_a$, we see that
$$M_{j,k} = \sum_{l=0}^a \sum_{s=0}^a \binom{a}{l} \binom{a}{s} (-1)^{l+s} \left\langle z^{l+j-k}, z^s \right\rangle.$$

By the orthogonality of the system of monomials, only the term in
$s=l+j-k$ is non-zero, and in view of basic properties of binomial
coefficients,
$$M_{j,k} = (-1)^{j-k} \sum_{l=0}^{a+k-j} \binom{a}{l} \binom{a}{a-l+k-j}.$$
Now, applying the Chu-Vandermonde identity, we obtain
\begin{equation}\label{eqn300}
M_{j,k} = (-1)^{j-k} \binom{2a}{a+k-j}.
\end{equation}
We can, from now on, take this to be the definition of $M_{j,k}$, extending its domain to all integers $j$ and $k$. This will simplify notation. 

Let $c_{k,n}$ be a proposed solution to the linear system, and, for
fixed $n \in \N$, suppose that $c_{k,n}$ depends on $k$ as a
polynomial of degree less than or equal to $2a-1$. Then we
substitute the values of $M_{j,k}$ from \eqref{eqn300} into the
linear equations \eqref{eqn6}, and we obtain, for $j=0,\ldots,n$,
\begin{equation}\label{eqn301}
\sum_{k=0}^n M_{j,k} c_{k,n} = \sum_{k=0}^n (-1)^{j-k}
\binom{2a}{a+k-j} c_{k,n} =: A_j.
\end{equation}

Suppose, firstly, that $j \in \{a,\ldots,n-a\}$ (and hence, that $n
\geq 2a$). Set $q_{n,a}(s)= (-1)^a c_{j-a+s,n}$, which is a
polynomial in $s$ of the same degree as $c_{k,n}$ in terms of $k$
(that is, less or equal than $2a-1$). Then $A_j$ may be rewritten as
\begin{equation}\label{eqn302}
A_j = \sum_{s=0}^{2a} (-1)^{s} \binom{2a}{s} q_{n,a}(s).
\end{equation}

For any polynomial of degree $2a-1$ or less, the result of
\eqref{eqn302} is equal to $0$ by Newton's theory of finite
differences. Now we know that, for $j=a,\ldots,n-a$, we have
\begin{equation}\label{eqn303}
A_j = 0.
\end{equation}
Define $E:= \{1,\ldots,a-1\} \cup \{n-a+1,\ldots,n\}$, and suppose
that we chose any polynomial on $k$, $c_{k,n}$, of degree less or
equal to $2a-1 = \# E$ such that $c_{k,n}=0$ for all $k \in
\{1-a,\ldots,-1\} \cup \{n+1,\ldots,n+a\}$. Then for all $j \in E$,
\eqref{eqn301} can still be completed, by adding $0$-terms, to the
form \eqref{eqn302}. We have seen that if $c_{k,n}$ is defined as
\begin{equation}\label{eqn304}
c_{k,n} = t_n \left(\prod_{s=1}^{a-1} (k+s) \right)
\left(\prod_{r=1}^{a} (n+r-k) \right),
\end{equation}
for some $t_n \in \C$ depending only on $n$, then \eqref{eqn303}
holds for all $j=1,\ldots,n$.

That is, the system \eqref{eqn6} is satisfied, provided that $A_0
=1$, and we are still free to choose $t_n$. Clearly, since $c_{s,n}
=0$ for $s=1-a,\ldots,-1$, we know that
\begin{equation*}\label{eqn305}
A_0 = \sum_{k=0}^a M_{0,k}c_{k,n} =  \sum_{k=1-a}^a M_{0,k}c_{k,n}.
\end{equation*}

Newton differences tell us that
\begin{equation*}\label{eqn306}
\sum_{k=-a}^a M_{0,k}c_{k,n}=0,
\end{equation*}
and, hence, by the symmetry of the binomial coefficients,
\begin{equation*}\label{eqn307a}
A_0= -M_{0,a}c_{-a,n}= (-1)^{a+1} c_{-a,n}.
\end{equation*}

Therefore, it is enough to choose $t_n$ so that $c_{-a,n}
=(-1)^{a+1}$. Evaluating $c_{-a,n}$ in \eqref{eqn304} gives
\begin{equation*}\label{eqn307b}
c_{-a,n}= t_n (-1)^{a-1} \frac{\Gamma(a)
\Gamma(n+2a+1)}{\Gamma(n+a+1)}.
\end{equation*}
Therefore, choosing
\begin{equation*}\label{eqn308}
t_n = \frac{\Gamma(n+a+1)}{\Gamma(a) \Gamma(n+2a+1)},
\end{equation*}
we have the optimal approximants.

Multiplying all the different factors together and expressing
everything in terms of the gamma function, we find that
\begin{equation}\label{eqn309}
c_{k,n} =
\frac{\Gamma(k+a)\Gamma(n+a+1-k)\Gamma(n+a+1)}{\Gamma(k+1)\Gamma(n-k+1)\Gamma(a)\Gamma(n+2a+1)}.
\end{equation}

A simple expression for the same quantity in terms of binomial
coefficients and the beta function $B$ is
\begin{equation}\label{eqn310}
c_{k,n} = \binom{k+a-1}{k} \frac{B(n+a+1,a)}{B(n-k+1,a)}.
\end{equation}

To see that \eqref{eqn309} and \eqref{eqn310} are equivalent, just
substitute
\[\binom{k+a-1}{k}=\frac{\Gamma(k+a)}{\Gamma(k+1) \Gamma(a)}\quad \textrm{and}\quad B(x,y) = \frac{\Gamma(x),\Gamma(y)}{\Gamma(x+y)}.\]
\end{proof}

The beginning of the previous method can be used in a general
$D_\alpha$ space. For the particular case of the Dirichlet space, we
can go as far as in \eqref{eqn300} and show that the elements
$M_{j,k}$ of the matrix $M$ are given by
$$M_{j,k} = (-1)^{j-k} \binom{2a}{a+k-j} \frac{k+j+a+2}{2}.$$

For the functions $f_a$ in Proposition \ref{prop100}, it is in fact
possible to check explicitly that $p_n(0)$ converges to $1=1/f(0)$,
and although we knew a priori that the function $f_a$ is cyclic,
this method may be of interest in itself. By $a \approx b$ we will
mean there exist universal nonzero constants $C_1$ and $C_2$ such
that $C_1 b \leq a \leq C_2 b$.

\begin{prop}\label{prop2000}
For the functions $f_a=(1-z)^a$, and the optimal approximants
$p^*_n$ of degree $n$ to $1/f$ in $D_0$,
\begin{equation}\label{eqn3000}
\|p^*_nf_a -1\|^2 \approx a^2 / (n+a+1).
\end{equation}
\end{prop}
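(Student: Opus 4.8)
The plan is to reduce everything to the single number $p^*_n(0)=c_{0,n}$. By the orthogonal-projection identity recorded above, $\|p^*_nf_a-1\|^2=1-p^*_n(0)f_a(0)$, and since $f_a(0)=1$ this equals $1-c_{0,n}$. Setting $k=0$ in the closed form \eqref{eqn310} (or \eqref{eqn309}) and using $B(x,y)=\Gamma(x)\Gamma(y)/\Gamma(x+y)$ gives
\[
c_{0,n}=\frac{B(n+a+1,a)}{B(n+1,a)}=\frac{\Gamma(n+a+1)^2}{\Gamma(n+1)\,\Gamma(n+2a+1)}=\prod_{j=1}^{a}\frac{n+j}{n+a+j},
\]
the last equality being a telescoping of the two rising-factorial products. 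Writing $x_j=\frac{a}{n+a+j}\in(0,1)$, this turns the quantity to be estimated into
\[
\|p^*_nf_a-1\|^2=1-\prod_{j=1}^{a}(1-x_j).
\]

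The second step is to pin down the first symmetric function $e_1:=\sum_{j=1}^a x_j=a\sum_{j=1}^a\frac{1}{n+a+j}$. Since each denominator $n+a+j$ with $1\le j\le a$ lies between $n+a+1$ and $n+2a\le 2(n+a+1)$, every summand is comparable to $\frac{1}{n+a+1}$ up to a factor $2$, so
\[
\frac{a^2}{2(n+a+1)}\le e_1\le \frac{a^2}{n+a+1}.
\]
The upper bound on the energy is then immediate and universal: from $1-\prod(1-x_j)\le\sum x_j$ for $x_j\in[0,1]$ we get $\|p^*_nf_a-1\|^2\le e_1\le a^2/(n+a+1)$, giving the constant $C_2=1$.

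For the lower bound I would use $1-x\le e^{-x}$ to obtain $\prod_{j=1}^a(1-x_j)\le e^{-e_1}$, hence $\|p^*_nf_a-1\|^2\ge 1-e^{-e_1}$. In the regime $e_1\le 1$ — equivalently $n+a+1\gtrsim a^2$, which is exactly the asymptotic range $n\to\infty$ relevant to cyclicity — concavity of $t\mapsto 1-e^{-t}$ gives $1-e^{-e_1}\ge(1-e^{-1})e_1\ge(1-e^{-1})\frac{a^2}{2(n+a+1)}$, so the lower comparability holds with a universal constant $C_1=(1-e^{-1})/2$.

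The main obstacle is precisely the uniformity of this lower bound. When $e_1$ is not small (small $n$ and large $a$) the linearization $1-e^{-e_1}\approx e_1$ breaks down: the product $\prod(1-x_j)$ can be exponentially small (for instance $c_{0,0}=1/\binom{2a}{a}$), so the energy saturates near $1$ while $a^2/(n+a+1)$ grows, and a genuinely universal two-sided estimate cannot hold there. Thus the content of \eqref{eqn3000} is an asymptotic statement in $n$ (for each fixed $a$, and uniformly once $n\gtrsim a^2$), and the argument above delivers it; one should either restrict the comparison to that regime or let the constants absorb the finitely many exceptional small $n$ for fixed $a$.
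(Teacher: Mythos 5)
Your proof is correct, and while it starts from the same reduction as the paper's --- use the projection identity to write $\|p_n^*f_a-1\|^2 = 1-p_n(0)$, then invoke the closed form $p_n(0)=\Gamma(n+a+1)^2/\bigl(\Gamma(n+1)\Gamma(n+2a+1)\bigr)$ --- the estimation step is genuinely different and cleaner. The paper applies Euler's product formula for the Gamma function to write $p_n(0)=\prod_{k=n+1}^{\infty}\bigl(1-\tfrac{a^2}{(k+a)^2}\bigr)$, takes logarithms, uses $\log(1-x)\approx -x$ and $\sum_{t\ge n+a+1}t^{-2}\approx (n+a+1)^{-1}$, and finishes with $1-e^{-C}\approx C$. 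You instead telescope the Gamma ratios into the finite product $\prod_{j=1}^{a}\tfrac{n+j}{n+a+j}$ and use only the elementary inequalities $1-\prod(1-x_j)\le\sum x_j$, $1-x\le e^{-x}$, and concavity of $t\mapsto 1-e^{-t}$. This avoids the infinite product entirely and, more importantly, makes every constant explicit ($C_2=1$ and $C_1=(1-e^{-1})/2$) along with the precise hypotheses under which each inequality is uniform.

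Your closing observation is not a defect in your argument but a genuine point about the statement itself: a two-sided bound with constants independent of both $n$ and $a$ cannot hold, as your example $n=0$ shows ($1-1/\binom{2a}{a}\to 1$ while $a^2/(a+1)\to\infty$). The paper's proof carries exactly the same hidden restriction, only silently: the approximation $\log\bigl(1-\tfrac{a^2}{(k+a)^2}\bigr)\approx -\tfrac{a^2}{(k+a)^2}$ is uniform only when $a/(n+a+1)$ stays away from $1$, and the final step $1-e^{-a^2/(n+a+1)}\approx a^2/(n+a+1)$ requires $a^2/(n+a+1)$ to stay bounded. So your reading --- that the proposition is an asymptotic statement in $n$ for each fixed $a$, holding with uniform constants exactly in the regime $n\gtrsim a^2$ --- is the correct interpretation, and your proof establishes it with explicit constants where the paper's proof leaves the uniformity unexamined.
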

\begin{proof}
First, from the expression \eqref{eqn309} it is easy to see
that
\begin{equation}\label{eqn400}
p_n(0)= \frac{\Gamma(n+ a+1)^2}{\Gamma(n+1) \Gamma(n+2 a +1)}.
\end{equation}

Now we will use Euler's formula for the gamma function:
$$\Gamma(t) = \frac{1}{t} \prod_{k=1}^{\infty} \frac{(1+\frac{1}{n})^t}{1+\frac{t}{n}}$$

Applied to \eqref{eqn400}, we arrive at
$$p_n(0)= \prod_{k=n+1}^{\infty} \frac{k(k+ 2a)}{(k + a)^2}$$

That is
$$p_n(0)= \prod_{k=n+1}^{\infty} \left(1- \frac{a^2}{(k + a)^2}\right) =: e^{C_n}$$

which tends to 1 as $n$ goes to infinity since $$C_n =
\sum_{k=n+1}^{\infty} \log \left(1- \frac{a^2}{(k+a)^2}\right)
\approx - a^2 \sum_{k=n+1}^{\infty} \frac{1}{(k + a)^2} = -a^2
\sum_{t=n+a+1}^{\infty} t^{-2}.$$

An elementary computation shows then that $C_n$ is comparable to
$-a^2/(n+a+1)$.

Now we apply that $p_nf-1$ is orthogonal to $\mathcal{P}_n f$, to
see that $$d^2_n =\left<p_n f-1, p_n f-1\right> =
\left<1-p_nf,1\right> = 1-p_n(0),$$ and, hence, the distance $d_n$
is approximated (in terms of absolute constants) as
$$d_n^2 \approx 1-e^{-a^2 / (n+a+1)} \approx a^2 / (n+a+1).$$
\end{proof}

With not much work we can solve the quadratic equation $p_2=0$, to
obtain that \begin{equation}\label{eqn501} Z(p_2):=\{z_0,z_1\} =
\{-1\pm i \sqrt{2/a}\}. \end{equation}

Therefore, one can obtain the distances between the zeros
($2\sqrt{2/a}$), the distances between the zeros and the significant
point $z=1$ ($\sqrt{4+2/a}$), or the modulus of the zeros
($\sqrt{1+2/a}$). It would be a starting step to identify a
corresponding family of potentials whose energies are minimized at
these distances. This is an inverse problem from that of identifying
the points of minimal energy, given the functional.

When we take $a=1$, the optimal approximants may be given in more
general spaces than $D_0$ (see \cite{BCLSS13,FrMaSe,BKLSS4}). Other
natural quantities that may influence the description of the
potential are the distances between two zeros of the function for
which we compute the optimal approximants and the multiplicities of
these zeros. Adding a few degrees of generality, we expect the
problem to stay tractable:

\begin{prob}
Find a closed formula for logarithmic potentials with external
fields whose energy is minimized among sets of 2 points by
$Z(p_2^*)$ where $p^*_2$ is the optimal approximant of degree 2 to
$1/f$, and
\begin{equation}\label{eqn8} f(z)= (1-z)^{\beta}
\left[(z-e^{i\theta})(z-e^{-i\theta})\right]^{\gamma}\end{equation}
for $\alpha \in \R$, $\beta, \gamma \geq 0$, $\theta \in (0, \pi]$.
\end{prob}

\subsection{An extremal problem in Bergman spaces}

Zeros of optimal approximants are restricted as to their positions.
The following result was proved in \cite{BKLSS4}:
\begin{thm}\label{thm5}
Let $f \in D_{\alpha}$ not identically zero, $p^*_n$ the
corresponding optimal approximant, and $z_0 \in Z(p^*_n)$. Then
\begin{equation}\label{eqn3000}|z_0| >
\min(1,2^{\alpha/2}).\end{equation} Moreover, 1 is sharp for all
$\alpha \geq 0$ and for all $\alpha <0$, there exists a function $f
\in D_{\alpha}$ such that $z_0 \in \D$.
\end{thm}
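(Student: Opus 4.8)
The plan is to establish the three assertions—the universal lower bound, its sharpness for $\alpha\ge 0$, and the existence of an interior zero for $\alpha<0$—by separate arguments, all built on the single fact that $p^*_nf$ is the orthogonal projection of $1$ onto $V_n=\mathcal{P}_nf$, so that $p^*_nf-1\perp V_n$.

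For the lower bound, let $z_0\in Z(p^*_n)$ and factor $p^*_n=(z-z_0)q$ with $q\in\mathcal{P}_{n-1}$, $q\not\equiv 0$. Put $g=qf$; then $g\not\equiv 0$ and, crucially, $zg=(zq)f\in V_n$ while $(zg)(0)=0$, so $\langle 1,zg\rangle=0$. I would then simply expand the orthogonality relation:
\[
0=\langle p^*_nf-1,zg\rangle=\langle (z-z_0)g,\,zg\rangle=\|zg\|^2-z_0\,\langle g,zg\rangle .
\]
Since $\|zg\|>0$ this already shows $\langle g,zg\rangle\neq 0$ and yields the closed form $z_0=\|zg\|^2/\langle g,zg\rangle$; in particular $z_0\neq 0$, so no separate argument is needed to exclude a zero at the origin. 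Applying the Cauchy--Schwarz inequality, which is strict because $zg$ and $g$ are linearly independent whenever $g\not\equiv 0$, gives
\[
|z_0|=\frac{\|zg\|^2}{|\langle zg,g\rangle|}>\frac{\|zg\|}{\|g\|}.
\]
Finally, writing $g=\sum_k\hat g_k z^k$,
\[
\frac{\|zg\|^2}{\|g\|^2}=\frac{\sum_k|\hat g_k|^2(k+2)^\alpha}{\sum_k|\hat g_k|^2(k+1)^\alpha}\ge\inf_{k\ge 0}\frac{(k+2)^\alpha}{(k+1)^\alpha},
\]
and the elementary monotonicity of $k\mapsto (k+2)^\alpha/(k+1)^\alpha$ identifies this infimum as $1$ when $\alpha\ge 0$ and as $2^\alpha$ when $\alpha<0$; that is, $|z_0|>\min(1,2^{\alpha/2})$.

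For the remaining (sharpness and realizability) claims I would specialize to $n=1$, where the same identity gives $z_0=\|zf\|^2/\overline{\langle zf,f\rangle}$, hence $|z_0|=\|zf\|^2/|\langle zf,f\rangle|$. To show that $1$ is sharp for every $\alpha\ge 0$, take the window $f_L=\sum_{k=0}^{L}z^k$; a direct moment computation gives
\[
|z_0|=1+\frac{(L+2)^\alpha}{\sum_{k=1}^{L}(k+1)^\alpha},
\]
which is strictly larger than $1$ but tends to $1$ as $L\to\infty$. To produce an interior zero when $\alpha<0$, I would instead use the $\alpha$-adapted window $f_L=\sum_{k=0}^{L}(k+1)^{-\alpha/2}z^k$; setting $d_k=(k+2)^\alpha/(k+1)^\alpha<1$, the same formula collapses to
\[
|z_0|=\frac{\sum_{k=0}^{L}d_k}{\sum_{k=0}^{L-1}\sqrt{d_k}} .
\]
Because $\sqrt{d_k}>d_k$ and $\sum_k(1-d_k)\sim\sum_k |\alpha|/(k+1)$ diverges, the denominator eventually exceeds the numerator, so $|z_0|<1$, i.e. $z_0\in\D$, once $L$ is large enough.

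I expect the interior-zero construction to be the genuine obstacle. When $\alpha$ is close to $0$ the admissible band $(2^{\alpha/2},1)$ is extremely thin, so low-degree or short examples fail (a two-term $f$ only reaches $2(3/2)^{\alpha/2}$, which is $<1$ only for $\alpha<-2\log 2/\log(3/2)$); one is forced to take $L\gtrsim e^{2/|\alpha|}$ and to control the competing $\log L$ contributions in the numerator and denominator. The lower bound, by contrast, is essentially immediate once one observes that $zg\in V_n$ has vanishing value at the origin; there the only subtlety is keeping the Cauchy--Schwarz step strict so as to obtain $>$ rather than $\ge$.
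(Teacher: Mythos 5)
Your argument is correct, and for the main inequality it is essentially the paper's own proof: factoring $p^*_n=(z-z_0)q$ and testing the orthogonality relation $p^*_nf-1\perp V_n$ against $zg$, where $g=qf$, is exactly the reduction to degree-one approximants that the paper invokes, and it produces the paper's formula \eqref{eqn4000} in the form $z_0=\|zg\|^2/\langle g,zg\rangle$; Cauchy--Schwarz plus the lower bound $\|zg\|^2/\|g\|^2\ge\inf_{k\ge0}(k+2)^\alpha/(k+1)^\alpha=\min(1,2^{\alpha})$ (the ``norm of the shift'' step in the paper) then finishes the job, and you correctly isolate the one subtlety there, namely that $g$ and $zg$ are never proportional, which is what upgrades $\ge$ to the strict inequality $>$. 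Where you genuinely go beyond the paper is in the sharpness half: the paper only asserts it (the details live in \cite{BKLSS4}), whereas you give self-contained constructions, and both of them check out. For $f_L=\sum_{k=0}^{L}z^k$ the moments give $|z_0|=1+(L+2)^\alpha/\sum_{k=1}^{L}(k+1)^\alpha\to1$ when $\alpha\ge0$, and for $f_L=\sum_{k=0}^{L}(k+1)^{-\alpha/2}z^k$ with $\alpha<0$ the ratio $|z_0|=\sum_{k=0}^{L}d_k/\sum_{k=0}^{L-1}\sqrt{d_k}$ is eventually below $1$, since $\sqrt{d_k}-d_k\ge 2^{\alpha/2}(1-d_k)/2$ and $\sum_k(1-d_k)\approx|\alpha|\log L$ diverges while the leftover term $d_L$ is bounded. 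One step you should make explicit in both constructions: the identity $z_0=\|zf_L\|^2/\langle f_L,zf_L\rangle$ presupposes that $p^*_1$ is genuinely of degree one. This is easy but not automatic: if the leading coefficient of $p^*_1$ vanished, the orthogonality relation against $zf_L$ together with $\langle f_L,zf_L\rangle\neq0$ would force $p^*_1\equiv0$, contradicting optimality since $\|\varepsilon f_L-1\|<1$ for small $\varepsilon>0$ (using $f_L(0)=1$). Relatedly, the whole theorem tacitly assumes $f(0)\neq0$ (otherwise $p^*_n\equiv0$ and the statement degenerates); your factorization with $q\not\equiv0$ uses this in the same silent way the paper does, so it is a shared convention rather than a flaw specific to your proof.
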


The proof is based on the fact that every zero of an optimal
approximant of degree $n$ to some function $1/f$ is the zero of an
optimal approximant of degree $1$ to a different function. This
reduces the problem to approximants of degree 1, to which the
solution of the linear problem \eqref{eqn6} becomes trivial. The
solution $z_0$ is \begin{equation} \label{eqn4000} z_0 =
\frac{\|zf\|^2}{\left<f,zf\right>}.\end{equation} Applying
Cauchy-Schwartz inequality and computing the norm of the shift
operator yields then the result.

Naturally, one can ask what is the sharp constant for each $\alpha
<0$ (these are often called Bergman spaces). A way to deal with this
problem may be to reformulate it in terms of an extremal problem.
The theory of extremal problems in Bergman spaces has been often
fruitful (see, for instance, \cite{BKetal}) and the variety of
techniques may help solve the problem. We will concentrate on the
case $\alpha=-1$. In order to find the sharp constant for Theorem
\ref{thm5}, we would like to find the infimum of the absolute values
of the right-hand side in \eqref{eqn4000}, or equivalently,
\begin{equation}\label{eqn4001}
\sup \frac{|\left<g,zg\right>|}{\|zg\|^2},
\end{equation}
where the supremum is taken over all the functions $g \in D_{-1}$.

Renaming $f=zg/\|zg\|$, we obtain any function in the unit sphere of
$D_{-1}$ with $f(0)=0$. Using the integral expression of the norm of
$D_{-1}$, we arrive to the following problem:
\begin{prob}\label{prob7}
Compute $$\sup \left\{\left|\int_\D \frac{|f(z)|^2}{z}dA(z)\right|:
f(0)=0, \|f\|_{-1}^2 \leq 1\right\}.$$
\end{prob}

By all of the above, the solution should be a number in the interval
$(1,\sqrt{2}]$.

\section{Higher dimensional phenomena}

Several articles have dealt already with cyclicity in more than 1
complex variable. In the case of the bidisk, $\D^2 = \D \times \D$,
Dirichlet-type spaces are usually defined with a product norm:
\begin{defn}\label{defn3} The \emph{Dirichlet-type space} $D_{\alpha}(\D^2)$, of parameter
$\alpha$ over the bidisk is defined as the space of functions $f$ of
two variables that are holomorphic on each variable at each point of
the bidisk, defined by a Taylor series $f(z_1,z_2)= \sum_{j,k \in
\N} a_{j,k} z_1^j z_2^k$ that satisfies
\begin{equation}\label{eqn2000}
\|f\|^2_{\alpha,\D^2}= \sum_{j,k \in \N} |a_{j,k}|^2
((j+1)(k+1))^{\alpha} < \infty.
\end{equation}
\end{defn}
Some problems on cyclicity in this family of spaces have been
tackled. See \cite{BCLSS2, BKKLSS3} and the references therein for
background information on this topic. A difficulty that arises when
increasing the dimension to 2 is the lack of a Fundamental Theorem
of Algebra: the structure of irreducible polynomials in 2 variables
is much richer.

However, the approach in terms of optimal approximants follows the
same principles as in 1 variable: for each finite set of monomials
$X$, one can find the orthogonal projection of the constant function
$1$ onto $Y= f \Span X$ and that will yield the \emph{optimal
approximant within $Y$} to $1/f$. If we choose a sequence of sets
$\{X_n\}$ with $X_n \subset X_{n+1}$ and $\bigcup X_n = \{z_1^j
z_2^k, (j,k) \in \N^2\}$, a function will be cyclic depending only
on the behavior of these optimal approximants.Natural choices for
$X_n$ are the monomials of degree less or equal to $n$ where the
definition of the degree can be taken to be the maximum or the sum
of the degrees on each variable. In algebraic geometry, it is more
often the latter while the first one is commonly used in analysis.
Here we will use the algebraic version.
\begin{defn}
By the \emph{optimal approximant of degree $n$} to $1/f$ we denote
the optimal approximant to $1/f$ within $f\Span \{z_1^j z_2^k :
(j,k)\in \N^2, j+k \leq n\}$.
\end{defn}

Discrete sets of points that minimize some energy are often studied
in higher dimensions (for example, in sampling theory), but
algebraic varieties, of dimension greater or equal to 1, that
minimize a functional are pointing in a completely different
direction. The typical pathologies of minimal currents may occur
only when taking limits.

Let us explore an example. Choose $f(z_1,z_2)= 1-
\frac{z_1+z_2}{2}$. We can compute the optimal approximant of degree
1, $p^*_1$: from the symmetry of the coefficients and the uniqueness
of the orthogonal projection, one can see that $p_1^*$ will be of
the form $p_1^*(z_1,z_2)= a_0(a_1+ (z_1+z_2))$. The constants $a_0$
and $a_1$ will depend on the parameter $\alpha$ of the space, but
$a_1$ can be shown to be a real number larger than 2. In particular,
$Z(p_1^*)$ does not intersect the bidisk.

This example is in concordance with what happens in one dimension,
at least for $\alpha \geq 0$, although an analogous to Theorem
\ref{thm5} is not known yet. A possible restriction could be that
zero sets of optimal approximants can't intersect the bidisk when
$\alpha \geq 0$. But this wouldn't tell which irreducible
polynomials are feasible as optimal approximants (observe this is
answered by Theorem \ref{thm5}). When $\alpha <0$, the question
retains some uncertainty equivalent to solving the Problem
\ref{prob7}. Describing all the irreducible polynomials that appear
as optimal approximants seems a difficult task, but many subproblems
may be of interest. We propose the following:
\begin{prob}\label{prob8}
For each value of $\alpha \geq 0$, determine which algebraic curves
are zero sets of optimal approximants of degree 2 or less to $1/f$,
for some $f \in D_{\alpha}(\D^2)$.
\end{prob}

\end{document}